\documentclass[11pt]{amsart}
\usepackage{hyperref}
\newcommand*\myat{{\fontfamily{pzc}\selectfont @}}
\usepackage[applemac]{inputenc}
\usepackage{amsmath,amssymb}
\usepackage{enumerate}

\newtheorem{thm}{Theorem}[section]
\newtheorem{Prop}[thm]{Proposition}
\newtheorem{lemm}[thm]{Lemma}

\theoremstyle{definition}

\newtheorem{rem}[thm]{Remark}

\newtheorem*{thm*}{Theorem}


\newcommand{\ce}{\mathbb E}

\newcommand{\Tr}{{\mathcal P}}

\newcommand{\sgn}{{\rm {sgn}}}
\newtheorem*{thank}{\ \ \ Acknowledgment}

\newcounter{tictac}

\def\1{\,\rlap{\mbox{\small\rm 1}}\kern.15em 1}

\def\build#1_#2^#3{\mathrel{\mathop{\kern 0pt#1}\limits_{#2}^{#3}}}
\def\tend#1#2{\build\hbox to 12mm{\rightarrowfill}_{#1\rightarrow #2}^{ }}

\def\converge#1#2#3#4{\build\hbox to
#1mm{\rightarrowfill}_{#2\rightarrow #3}^{\hbox{\scriptsize #4}}}


\newcommand{\Prod}{\mathop{\larger{\larger{\larger{\prod}}}}}

\def\Sup#1{\larger{\sup_{#1}}}

\def \h{h}

\newcommand{\beq}{\begin{equation}}
\newcommand{\eeq}{\end{equation}}

\setlength{\textwidth}{14cm}

\def\M{\mathcal M}

\def\Z{\mathbb Z}
\def\Pr{\mathbb P}

\def\bR {{\mathbf b}\R }

\def\R{\mathbb R}
\def\T{\mathbb T}
\def\N{\mathbb N}
\def\C{\mathbb C}
\def\Q{\mathbb Q}

\def\AP {\mathcal AP}
\def \ds {\displaystyle}
\def \re {{\rm {Re}}}

\newtheorem{question}{Question}

\setcounter{tocdepth}{1}

\begin{document}

\title[Generalized Riesz Products on the Bohr compactification of $\R$]{Generalized Riesz Products
on the Bohr compactification of $\R^{(*)}$}

\author{E.\ H. \ {El} Abdalaoui}
\address{ Universit\'e de Rouen-Math\'ematiques \\
  Labo. de Maths Raphael SALEM  UMR 60 85 CNRS\\
Avenue de l'Universit\'e, BP.12 \\
76801 Saint Etienne du Rouvray - France.
}
\email{elhoucein.elabdalaoui@univ-rouen.fr \\ \href{mailto:foo@bar}{Gmail:elabdelh\myat gmail.com}}



\begin{abstract}
We study a class of generalized Riesz products connected to the spectral type of some class of
rank one flows on $\R$. Applying Kac's central limit theorem, we exhibit a large class of singular
generalized Riesz products on the Bohr compactification of $\R$.\\

$^*$Dedicated to Professors Jean-Paul Thouvenot and Bernard Host.\\
{\textbf {revised version II.}}
\vspace{8cm}\\

\hspace{-0.7cm}{\em AMS Subject Classifications} (2010) Primary: 42A05, 42A55;
Secondary: 11L03, 42A61.\\

\hspace{-0.7cm}{\em Key words and phrases:}
Generalized Riesz products, almost periodic functions,  Bohr compactification, mean value,  Kac's central limit theorem, Kakutani's criterion, Bourgain's singularity criterion, flat polynomials.\\
\end{abstract}
\maketitle
\newpage
\section{Introduction.}
The purpose of this paper is to extend and to study the notion of generalized
Riesz product in the setting of the Bohr compactification of $\R$.  This notion was formulated in the same manner as Peyri\`ere  in \cite{elabdal-mlem}. Therein, the authors proved that the spectral type of some class of rank one flows is given by some kind of generalized Riesz product on $\R$ normalized by some class of kernels. Here, following Peyri\`ere's suggestion  \cite{peyriere}, we give an alternative extension using the Bohr compactification of $\R$.\\

It is usual that the extension of some notions
from the periodic setting to the almost periodic ones requires
the consideration of the Bohr compactification of $\R$, which plays in the almost periodic case
the same role played by the torus $\T\stackrel{\rm {def}}{=}\{t \in \C,|t|=1\}$ in the periodic case.
As opposed to the torus, the Bohr compactification is often a non-separable compact topological space
and this lack of separability is a source of difficulties in trying to adapt the arguments from the periodic
context to the almost periodic one. Peyri\`ere also mentioned those difficulties in \cite{peyriere}.\\


Our analysis is also motivated by the recent growing interest in the problem of
the flat polynomials in the real setting suggested in \cite{prikhodko}.
It turns out that the main idea  developed in \cite{prikhodko} can not be adapted in ours. For a recent account on the problem of
the flat polynomials we refer the reader to \cite{QS}. \\

The paper is organized as follows. In section 2, we recall some standard facts on
almost periodic functions and the Bohr compactification of $\R$. In section 3, we introduce
generalized Riesz products on the Bohr compactification of $\R$ and state our main result concerning the singularity of a large class of these generalized Riesz products. In section 4, we summarize and extend the relevant material on Kakutani's criterion and Bourgain's criterion on the  singularity of the generalized Riesz products introduced in section 3.
In section 5, we state and prove the central
limit theorem due to M. Kac. Finally, in section 6, we apply Kac's central
limit theorem to prove our main result.

\section{The Bohr compactification of $\R$.}
The Bohr compactification of $\R$ is based on the theory of almost periodic functions initiated by
H. Bohr \cite{bohr} in connection with the celebrated Riemann's $\zeta$-function.
In this section we are going to recall the basic ingredients of this theory. For
the classical presentation we refer the reader to \cite{bohr}, \cite{Besicovitch} and
\cite[Chap V. section 2]{Katznelson}.

The space of all almost periodic functions is denoted by $\AP(\R)$. It is well known that $\AP(\R)$ is a subspace of the space of bounded continuous functions on $\R$. An important characterization of almost periodic functions is due to Bohr and it can be stated as follows

\begin{thm}[Bohr] A bounded continuous function $f$ is almost periodic function if, and only if, $f$ is uniformly
approximated by finite linear combinations of functions in the set
$\left\{\cos(tx),\sin(tx)\right\}_{t \in \R}$.
\end{thm}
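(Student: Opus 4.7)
The plan is to treat the two implications separately, since one direction is routine and the other is the substantive content. For the easy direction, I would show that each function $\cos(tx),\sin(tx)$ is (periodic, hence) almost periodic, and that $\AP(\R)$ is a subspace of the bounded continuous functions that is closed under uniform convergence. The subspace part uses that if $\tau$ is a common $\e$-almost period of $f,g$ then it is an $\e(|a|+|b|)$-almost period of $af+bg$; the existence of common almost periods with relatively dense occurrence follows either directly (intersecting two relatively dense syndetic sets of almost periods, which is most cleanly seen via Theorem \ref{Bochner}) or through the compactness criterion, since a product of two relatively compact orbits is relatively compact in the sup-norm topology. Closure under uniform limits is a standard $\e/3$ argument: if $f_n\to f$ uniformly and $\|f-f_{n_0}\|_\infty<\e/3$, any $\e/3$-almost period of $f_{n_0}$ is an $\e$-almost period of $f$.

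For the hard direction — that $f\in \AP(\R)$ is uniformly approximable by finite linear combinations of $\cos(tx),\sin(tx)$ — I would proceed in three steps. First, establish the mean value $M(f)=\lim_{T\to\infty}\frac{1}{2T}\int_{-T}^{T}f(x)\,\d x$ for every $f\in\AP(\R)$: using Bochner's characterization, the orbit $\{f(\cdot+t)\}_{t\in\R}$ is totally bounded in sup-norm, and this forces the averages $\frac{1}{2T}\int_{-T}^{T}f(x+s)\,\d x$ to be Cauchy uniformly in $s$, giving a translation-invariant positive linear functional $M$ on $\AP(\R)$. Second, define, for each $\lambda\in\R$, the Bohr--Fourier coefficient $a(\lambda)=M\!\left(f(x)e^{-i\lambda x}\right)$; endowing $\AP(\R)$ with the inner product $\langle f,g\rangle:=M(f\bar g)$, a Bessel-type inequality $\sum_{\lambda}|a(\lambda)|^2\le M(|f|^2)$ forces the Bohr spectrum $\Lambda_f:=\{\lambda:a(\lambda)\ne0\}$ to be countable.

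Third — and this is where the real work lies — I would construct trigonometric polynomials $P_n$ with frequencies in $\Lambda_f$ such that $P_n\to f$ uniformly. The classical route is Bogolyubov's: convolve $f$ against a Fejér-like kernel built from positive-definite trigonometric polynomials supported on long intervals of $\R$, producing trigonometric polynomials whose uniform distance to $f$ tends to zero; positivity of the kernel together with the equicontinuity supplied by Bochner's theorem yields the uniform estimate. A conceptually cleaner route, and the one better aligned with the rest of the paper, is to identify $\AP(\R)$ with $C(\bR)$ (continuous functions on the Bohr compactification), observe that the characters $\chi_\lambda(x)=e^{i\lambda x}$ separate points of $\bR$ and form a conjugation-closed unital subalgebra of $C(\bR)$, and then invoke the Stone--Weierstrass theorem on the compact Hausdorff group $\bR$.

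The main obstacle is this last approximation step; everything else is either formal or a direct consequence of Bochner's compactness criterion. In the Stone--Weierstrass approach the difficulty is really absorbed into verifying the identification $\AP(\R)\cong C(\bR)$ and the separation of points of $\bR$ by characters, while in the Bogolyubov approach it reappears in the explicit construction and positivity of the approximating kernel. The rest of the paper will favor the $C(\bR)$ viewpoint, so I would ultimately present the proof in that framework, reducing Bohr's theorem to Stone--Weierstrass on $\bR$.
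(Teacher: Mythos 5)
The paper does not prove this theorem at all: it is stated as a classical fact with references to Bohr \cite{bohr} and Besicovitch \cite{Besicovitch}, so there is no in-paper argument to compare against. Your outline of the easy direction is correct, including the correct observation that closure of $\AP(\R)$ under addition is not immediate from the relatively-dense-almost-periods definition and is best obtained from Bochner's criterion. Your three-step outline of the hard direction (mean value, Bohr--Fourier coefficients with Bessel's inequality and countable spectrum, then a Bochner--Fej\'er/Bogolyubov summation kernel) is the standard Weyl--Besicovitch proof and is sound.

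The one point you should be careful about is the ``conceptually cleaner'' Stone--Weierstrass route, which as you describe it risks circularity. In the paper's logical order, the Bohr compactification $\bR$ is only \emph{defined} afterwards, via the Gelfand--Raikov--Chilov theorem, as the maximal ideal space of the $C^\star$-algebra $\AP(\R)$; Gelfand--Naimark then gives $\AP(\R)\cong C(\bR)$ for free. But to run Stone--Weierstrass on $\bR$ you must know that the characters $\chi_\lambda$ separate the points of this maximal ideal space, and two multiplicative functionals on $\AP(\R)$ that agree on every $e^{i\lambda x}$ agree on the closed subalgebra they generate --- so ``characters separate points of $\bR$'' is essentially equivalent to the statement you are trying to prove. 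To make that route non-circular you must either take the Pontryagin definition $\bR=\widehat{\widehat{\R}_d}$ and then prove separately that every $f\in\AP(\R)$ extends continuously to it (which is again the hard content), or use the von Neumann argument: by Bochner's criterion the orbit closure $G_f=\overline{\{f(\cdot+t)\}}$ is a compact abelian group through which $f$ factors, and Peter--Weyl on $G_f$ produces the approximating character sums. Either fix works; the Bochner--Fej\'er kernel route you sketched first needs no repair.
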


\noindent{}We  will denote by $\bR$ the Bohr compactification of $\R$, and by $C(\bR)$ the space of continuous functions
 on $\bR$. We remind that one can define $\bR$ as follows:

\begin{thm}[\cite{Gelfand-RC}]\label{Gelfand}
The group $\R$, equipped with the usual addition operation, may be embedded as a dense subgroup of a compact abelian
group $\bR$ such that $\AP(\R)$ is the family of all restrictions functions $f_{|\R}$ to $\R$
of functions $f \in C(\bR)$. The operator $f \longmapsto f_{|\R}$ is an isometric $\star$-isomorphism
of $C(\bR)$ onto $\AP(\R)$.
Moreover, the addition operation $+~~:~~ \R \times \R \longrightarrow  \R$ extends
uniquely to the continuous group operation of $\bR$, $+~~ : \bR \times \bR \longrightarrow \bR.$
The group $\bR$ is called the Bohr compactification of $\R$.
\end{thm}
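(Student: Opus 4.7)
The plan is to deduce the theorem from Gelfand--Naimark duality applied to the commutative $C^*$-algebra $\AP(\R)$, and then to upgrade the resulting compact Hausdorff space to a topological group by exploiting translation invariance of $\AP(\R)$.

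First I would verify that $\AP(\R)$, equipped with the sup norm, pointwise multiplication, and complex conjugation, is a \emph{unital commutative $C^*$-subalgebra} of $C_b(\R)$. Closure under sums, scalars, and conjugation is immediate from the definition; closure under products follows from the estimate
\[
\|(fg)(\cdot+\tau)-fg\|_{\infty}\le\|f\|_{\infty}\|g(\cdot+\tau)-g\|_{\infty}+\|g\|_{\infty}\|f(\cdot+\tau)-f\|_{\infty},
\]
so a common $\vep/(2\max(\|f\|_\infty,\|g\|_\infty))$-almost period for $f$ and $g$ (which exists by relative density) is an $\vep$-almost period for $fg$. Uniform closure is standard. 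The constants supply the unit, and the functions $e^{i\la x}$ guarantee that $\AP(\R)$ separates points of $\R$.

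Next I would invoke the Gelfand--Naimark theorem: set $\bR:=\mathrm{Spec}\,\AP(\R)$, the space of non-zero algebra homomorphisms $\AP(\R)\to\C$, equipped with the weak-$*$ topology. Then $\bR$ is compact Hausdorff and the Gelfand transform $f\mapsto\widehat{f}$ is an isometric $*$-isomorphism $\AP(\R)\to C(\bR)$; inverting it yields the restriction map of the statement once we identify $\R$ with evaluation characters. Concretely, define $\iota:\R\hookrightarrow\bR$ by $\iota(t):f\mapsto f(t)$. Continuity of $\iota$ comes from continuity of the individual $f\in\AP(\R)$; injectivity from the point-separating property just noted. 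For density, if $\overline{\iota(\R)}\ne\bR$ then by Urysohn there would exist a non-zero $F\in C(\bR)$ vanishing on $\iota(\R)$, and its pre-image under the Gelfand transform would be a non-zero almost periodic function identically zero on $\R$, a contradiction. This already delivers conclusions (1)--(3) of the theorem.

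The main obstacle, and the last step, is to produce the continuous group law on $\bR$. I would approach this through the uniformity on $\R$ generated by the pseudometrics $d_f(s,t)=|f(s)-f(t)|$, $f\in\AP(\R)$. Bochner's characterization (Theorem \ref{Bochner}) tells us that for each $f\in\AP(\R)$ the orbit $\{f(\cdot+t):t\in\R\}$ is totally bounded in $C_b(\R)$; evaluating at $0$ shows that $(\R,d_f)$ is itself totally bounded, so $\R$ is totally bounded in the almost periodic uniformity, and its completion coincides (by the universal property of Gelfand duality) with $\bR$. For each fixed $s\in\R$, translation $R_s:f\mapsto f(\cdot+s)$ is a $*$-automorphism of $\AP(\R)$, hence induces a homeomorphism $\Phi_s$ of $\bR$ extending $t\mapsto t+s$ on $\R$; this gives a well-defined partial addition $\bR\times\R\to\bR$. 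To extend to the full product, I would use that the translation orbits $\{R_t f\}_{t\in\R}$ are relatively compact in sup norm: given $\vep>0$ and $f\in\AP(\R)$ one chooses $t_1,\dots,t_N$ so that every $R_t f$ lies within $\vep$ of some $R_{t_i}f$, which yields the uniform estimate
\[
|f(s+t)-f(s'+t')|\le 2\vep+\max_i\bigl(|f(s+t_i)-f(s'+t_i)|+|f(s'+t_i)-f(s'+t_i')|\bigr),
\]
and hence shows that $+:\R\times\R\to\R$ is uniformly continuous for the almost periodic uniformity on both sides. Passing to completions produces the continuous extension $+:\bR\times\bR\to\bR$; associativity, commutativity, and existence of inverses follow by density of $\iota(\R)$ and continuity, completing the proof.

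The most delicate point is precisely this last step, the joint continuity of the extended addition. Everything else is a formal application of Gelfand--Naimark; it is the passage from the separate translation action of $\R$ on $\bR$ to a genuine topological group law that requires the equicontinuity packaged inside Bochner's compactness criterion.
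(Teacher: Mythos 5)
The paper offers no proof of Theorem \ref{Gelfand}: it is quoted as a known result of Gelfand, Raikov and Chilov, obtained ``as an application of their theory of commutative Banach algebras,'' and the reader is sent to \cite{Gelfand-RC}. Your proposal is therefore not competing with an argument in the text; it reconstructs the standard proof, and it does so along exactly the lines the paper attributes to the original authors: realize $\bR$ as the Gelfand spectrum of the commutative unital $C^*$-algebra $\AP(\R)$, identify $\R$ with the evaluation characters, get density from surjectivity of the Gelfand transform, and then extend addition by uniform continuity with respect to the almost periodic uniformity, using the total boundedness of translation orbits supplied by Bochner's criterion (Theorem \ref{Bochner}). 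The overall architecture is correct, and you are right that the only genuinely delicate point is the joint (not merely separate) continuity of the extended addition.

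Two steps need repair. First, your justification that $\AP(\R)$ is closed under products is invalid as written: you claim a common $\vep'$-almost period for $f$ and $g$ ``exists by relative density,'' but the intersection of two relatively dense sets can be empty (even and odd integers, say), so relative density of each set of almost periods does not by itself produce common almost periods. That common almost periods of finitely many almost periodic functions do form a relatively dense set is a theorem of Bohr with a nontrivial proof; the painless fix is to derive closure under products directly from Bochner's criterion (extract successive uniformly convergent subsequences of $f(\cdot+t_n)$ and $g(\cdot+t_n)$), which you already have at your disposal. Second, the displayed inequality in the last step is garbled: $t_i'$ is undefined, and the quantities $|f(s+t_i)-f(s'+t_i)|$ and $|f(s'+t)-f(s'+t')|$ are not controlled by the single pseudometric $d_f(u,v)=|f(u)-f(v)|$ you introduced, but by the pseudometrics attached to the \emph{translates} $R_{t_i}f$, which are again almost periodic. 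The correct bookkeeping is: split $|f(s+t)-f(s'+t')|\le |R_tf(s)-R_tf(s')|+|R_{s'}f(t)-R_{s'}f(t')|$, approximate $R_tf$ and $R_{s'}f$ within $\vep$ by members of the finite net $R_{t_1}f,\dots,R_{t_N}f$, and conclude that smallness of $d_{R_{t_i}f}(s,s')$ and $d_{R_{t_i}f}(t,t')$ for all $i$ forces $|f(s+t)-f(s'+t')|\le 6\vep$. With these two corrections the argument is complete.
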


For simplicity of notation, for any $f$ in $\AP(\R)$, we use the same letter $f$ for its canonical
extension to $\bR$. We denote by $h$ the Haar measure on $\bR$, normalized to be a probability measure, and by $dt$ the usual Lebesgue measure on $\R$.\\

It is obvious that the continuous characters of $\bR$ are the
functions ${e^{i\lambda.}} ~:~\bR \longrightarrow \T$. For $f \in \AP(\R)$  we denote by $\int_{\bR} f(t) dh(t) $ the asymptotic mean value of $f$, given by
$$\int_{\bR} f(t) dh(t) =\lim_{T \longrightarrow +\infty}
\frac{1}{2T}\int_{-T}^{T}f(t) dt.$$

\noindent{}Following \cite{Katznelson}, for any $f\in \AP(\R)$, we put
\[
\widehat{f}\big(\{\lambda\}\big)=\int_{\bR} f(t) e^{-i\lambda t} dh(t).
\]
$\big(\widehat{f}\big(\{\lambda\}\big)\big)_{\lambda \in \R}$  are the Fourier coefficients of $f$ relative to orthonormal family
${\{e^{i \lambda t}\}_{\lambda \in \R}}$; the inner product is given by
$$<f,g>=
\int_{\bR} {f}(t) {\overline{g}}(t) d\h(t).$$\\
Furthermore, for any $p \geq 1$, we denote by $\|\cdot\|_p$ the norm in $L^p(\bR,h)$. We remind that  the sequence of probability measures $(\mu_n)$ on $\bR$ converge in the weak-$\star$ topology to some probability measure $\mu$ if, for any $\lambda \in \R$,
$$\widehat{\mu_n}(\lambda)=\int_{\bR}e^{-i \lambda t} d\mu_n(t) \tend{n}{+\infty}\widehat{\mu}(\lambda).$$

\section{Generalized Riesz Products on $\bR$.}
Riesz products was introduced in \cite{Riesz}. Therein, F. Riesz construct a continuous measure on the torus whose Fourier coefficients do not vanish at infinity. Roughly speaking, Riesz products are measures on $\T$ which are constructed inductively. This powerful construction can be used to produce examples of measures with desired properties. Later, Riesz's construction
was extended in \cite[p.208]{Zygmund}.\\

As shown in \cite{Ledrappier} Riesz products can be realized as a spectral type of some dynamical systems. Specific Riesz products are the right tool to describe the spectrum
of a class of dynamical systems arising from the
substitution \cite{Queffelec1},\cite{Queffelec2}.
A large class of Riesz products was realized in \cite{Host-mela-parreau} as the maximal spectral type of the unitary operator associated to a non-singular dynamical system and a cocycle over it. In \cite{Bourgain}, the author
established a connection between some class of generalized Riesz products
on the circle and the maximal spectral type of rank one maps. Alternative proofs were given
in \cite{Choksi-Nadkarni},\cite{Nadkarni} and \cite{Klemes-Reinhold}.\\

Generalized Riesz products analogous to  Peyri\`ere-Riesz products were realized in \cite{elabdal-mlem}  as
a spectral type of some class of rank one flows.\\

Here, our aim is to extend the notion of generalized Riesz products to $\bR$. Let $(p_k)_{k \in \N}$ be a sequence of positive integers greater than $2$ and $\Big((s_{k,j})_{j=0, \cdots,p_k}\Big)_{k \geq 0}$ be
a sequence of finite sequence of positive
real numbers with $s_{k,0}=0$ for any $k \in \N$. Put
\begin{eqnarray}\label{Polynomials}
P_k(t)=\frac1{\sqrt{p_k}}\sum_{j=0}^{p_k-1} e^{i t(jh_k+\overline{s_{k,j}})} ,
\end{eqnarray}
where $\overline{s_{k,0}}=0,$ and $\overline{s_{k,j}}=\sum_{l=0}^{j}{s}_{k,l},~1 \leq j \leq p_k-1$. The sequence $(h_k)$ is defined inductively by
\begin{eqnarray}\label{heigh}
h_0=1 {\rm {~~and~~}} h_{k+1}=p_k h_k +\sum_{l=0}^{p_k-1}{s}_{k,l}=p_kh_k+\overline{s_{k,p_k-1}}.
\end{eqnarray}
For  $p,q \in \{0,\cdots,p_k-1\}$, we introduce the following sequence of positive real numbers
\begin{eqnarray*}\label{spacers}
\overline{s_{k,p,q}}=\sum_{j=\min(p,q)+1}^{\max(p,q)}s_{k,j}=(\overline{s_{k,p}}-\overline{s_{k,q}})~\sgn(p-q).
\end{eqnarray*}
Finally, for any real number $t$, we set $e(t)=e^{it}.$

\begin{thm}[Generalized Riesz Products on $\bR$] Let $(P_n)_{n \in \N}$ be a family of trigonometric polynomials given by
 \eqref{Polynomials} and $\sigma_n=\ds {\Prod_{k=0}^{n}\big|P_k(t)\big|^2} \,d\h(t)$, $n=0,1,2,\cdots$. Then $(\sigma_n)_{n \geq 0}$ converge in the weak-$\star$ topology to some probability measure $\sigma$ on $\bR$. $\sigma$ will be denoted by
$$\ds {\Prod_{k=0}^{\infty}\bigg|P_k(t)\bigg|^2}.$$
\end{thm}

\begin{proof}{}
Let $R_n(t)=\big|P_{0}(t) \cdots P_n(t)\big|^2$. Then $\sigma_n= R_n(t)d\h(t)$, and by the
definition of $P_n$, we have
\[
\big|P_n(t)\big|^2=1+\Delta_n(t) {\textrm{~with~}}
\Delta_n(t) =\frac{1}{p_n}\sum_{p \neq q}
e\bigg(\big((p-q)h_n+\overline{s_{n,p,q}}~\sgn(p-q)\big)t\bigg).
\]
\noindent{}Hence, obviously
$$\int_{\bR} \big|P_n(t)\big|^2 dh(t)=1.$$
Put
$$W_n=\{(b-a)h_{n}+\overline{s_{n,b}}-\overline{s_{n,a}} \mid  b\neq a \in \{0,\cdots,p_{n}-1\}\}.$$
By expanding the product of the $ \left|P_{k}(t)\right|^2$, we can write
$$\int_{\bR} R_n(t) d\h(t)-1,$$
as a sum of terms of the type
$$\frac1{p_0 \cdots p_n}\int_{\bR} e\bigg( \sum_{j=1}^n \epsilon_jw_j t\bigg) dh(t),$$
where each $\epsilon_j$ is 0 or 1 (not all $=0$) and each $w_j$ belongs to $W_j$. To show that $\ds \int_{\bR} R_n(t) d\h(t)-1=0$,  it is sufficient to prove that each of these terms is null. For that, it is sufficient to prove that each of the numbers $\sum_{j=1}^k \epsilon_jw_j$ has absolute value $\geq1$.\\

We consider one of these expressions $\sum_{j=1}^n \epsilon_jw_j$ and denote by $j_0$ the greater index $j$ such that $\epsilon_j\neq0$. Then, we have
$$\left|\sum_{j }\epsilon_j w_j\right|\geq \left|w_{j_0}\right|-\sum_{j<j_0} \left|w_{j}\right|.$$
Moreover, for all $j$,
$$h_{j}\leq \left|w_{j}\right| \leq (p_{j}-1)h_{j}+\overline{s_{j,p_{j}-1}}.$$
Hence, to conclude, it is sufficient to prove that
$$
h_{{j_0}}-\sum_{j=0}^{j_0-1}\big((p_{j}-1)h_{j}+\overline{s_{j,p_{j}-1}}\big)\geq 1.
$$
The equality
$$
h_{n}=p_{n-1}h_{n-1}+\overline{s_{n-1,p_{n-1}-1}}
$$
implies by induction
$$
h_{n}-\sum_{k=0}^{n-1}(p_{k}-1)h_{k}+\overline{s_{k,p_{k}-1}}=1,
$$
whence
\begin{eqnarray}\label{proba1}
\int_{\bR} R_n(t) d\h(t)=1.
\end{eqnarray}
\noindent{}Thus $\sigma_n$ is a probability measure on $\bR$. In addition, for any $t \in \R$,  we have
\[
R_{n+1}(t)=R_n(t)|P_n(t)|^2=R_n(t)+R_n(t)\Delta_n(t),
\]
\noindent{}since $\widehat{R_{n}}$ and $\widehat{\Delta_{n}}$ are $\geq 0$ on $\R$
\begin{eqnarray*}
 \widehat{\sigma_{n+1}}\big(\lambda\big)&=&\widehat{R_{n+1}}\big(\{\lambda\})\\
 &=& \widehat{R_{n}}\big(\{\lambda\})+
\widehat{R_n}*\widehat{\Delta_n}(\lambda)\\
& \geq& \widehat{R_{n}}\big(\{\lambda\})=
\widehat{\sigma_n}\big(\lambda\big).
\end{eqnarray*}

\noindent{}Consequently the limit $r_{\lambda}$ of the sequence $(\widehat{\sigma_n}\big(\lambda\big))$ exists.
Now, since $\bR$ is a compact space, and $(\sigma_n)$ is a sequence of probability measures on $\bR$, we can
extract a subnet $(\sigma_{n_k})$ which converges weakly to some probability measure on $\bR$. This gives that the limit of $(\sigma_n)$ exists in the weak-$\star$ topology and the proof is complete.
\end{proof}
\noindent{}The proof above is largely inspired by Lemma 2.1 in \cite{Klemes-Reinhold}; it gives more,
namely, the polynomials $P_n$ given in \eqref{Polynomials} can be
chosen with positive coefficients and satisfying
$$\int_{\bR} \bigg|P_n(t)\bigg|^2 d\h(t)=1 {\rm {~~and~~}}
\int_{\bR} \Prod_{j=1}^{n}\bigg|P_j(t)\bigg|^2 d\h(t)=1.$$
We further mention that we have
\begin{eqnarray}\label{ries-prop}
\int_{\bR} \Prod_{j=1}^{k}\bigg|P_{n_j}(t)\bigg|^2 d\h(t)=1,
\end{eqnarray}
\noindent{}for any given sequence of positive integers
 $n_1<n_2<\cdots<n_k,~~k \in  \N^*$. The proof is the same as the proof of \eqref{proba1}.\\

We are now able to formulate our main result.

\begin{thm}[Main result]\label{main-here}
Let $(p_m)_{m \in \N}$ be a sequence of positive integers greater than $1$ and $((s_{m,j})_{j=0}^{p_m-1})_{m \in \N}$ be a
sequence of positive real numbers. Defining $(h_m)$ as in \eqref{heigh} and assuming that there exists a sequence of positive integers
$m_1<m_2<\cdots,$ such that, the numbers $h_{m_j},s_{m_j,0},\cdots,s_{m_j,{p_{m_j}-1}},$ $j=1,2, \cdots,$ are rationally
independent. Then the generalized Riesz product
$$\mu=\ds {\prod_{k=0}^{+\infty}\big|P_k(t)\big|^2}$$
where $P_k$ is given by \eqref{Polynomials},  is singular with respect to the Haar measure on $\bR$.
\end{thm}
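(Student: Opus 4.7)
My strategy is to combine the Kakutani/Bourgain-type singularity criterion of Section 4 with the Kac Central Limit Theorem of Section 5, working along the subsequence $(P_{m_j})_{j\geq 1}$ supplied by the hypothesis. The goal is to show that the partial product $\prod_{j=1}^N \bigl|P_{m_j}(\theta)\bigr|^2$ tends to $0$ in Haar probability on $\bR$; the criterion of Section 4 will then yield $\mu \perp h$.

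\textbf{Within-block spectral analysis.} For each $j$ I introduce $\xi_{j,l} = l\,h_{m_j} + \widecheck{s}_{m_j,0,l}$, $l = 0,\dots,p_{m_j}-1$ (so $\xi_{j,0}=0$). Expressing $\xi_{j,l}$ in the basis $(h_{m_j},s_{m_j,0},\dots,s_{m_j,p_{m_j}-1})$ gives an upper-triangular coefficient array of full rank; hence, by the rational independence hypothesis, the non-zero frequencies $\xi_{j,l}$ and the non-zero differences $\xi_{j,l}-\xi_{j,l'}$ are $\Q$-linearly independent. Setting
\[
\phi_j(\theta) := |P_{m_j}(\theta)|^2 - 1 = \frac{1}{p_{m_j}}\sum_{0\leq l \neq l'\leq p_{m_j}-1} e\bigl(i\theta(\xi_{j,l}-\xi_{j,l'})\bigr),
\]
orthogonality of distinct characters under the Haar measure on $\bR$ yields $\intBohr \phi_j\,d\theta = 0$ and $\intBohr \phi_j(\theta)^2\,d\theta = 1 - 1/p_{m_j} \geq 1/2$ for every $j$.

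\textbf{Applying Kac's CLT.} Next I would invoke the Kac CLT of Section 5 for the partial sums $\Sigma_N(\theta) = \sum_{j=1}^N \phi_j(\theta)$: suitably normalized, $\Sigma_N$ is asymptotically Gaussian under $h$ with variance comparable to $N$, so that $|\Sigma_N(\theta)| \leq C\sqrt{N}\log N$ on a set $E_N \subset \bR$ with $h(E_N)\to 1$. Coupling this with (i) a Borel--Cantelli-type control on $\max_{j\leq N}|\phi_j(\theta)|$ that keeps the $\phi_j(\theta)$'s uniformly bounded on $E_N$, (ii) a law-of-large-numbers lower bound $\sum_{j=1}^N \phi_j(\theta)^2 \geq cN$ on $E_N$, and (iii) the elementary inequality $\log(1+x) \leq x - \tfrac14 x^2$ valid on any bounded interval, one deduces
\[
\sum_{j=1}^N \log|P_{m_j}(\theta)|^2 \leq C\sqrt{N}\log N - \tfrac{c}{4}N \longrightarrow -\infty
\]
uniformly on $E_N$. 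Thus $\prod_{j=1}^N |P_{m_j}|^2 \to 0$ in $h$-probability and the Kakutani criterion of Section 4 yields $\mu \perp h$.

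\textbf{Main obstacle.} The delicate point is that the hypothesis only guarantees \emph{within-block} rational independence: the frequencies and their differences are $\Q$-linearly independent inside each block $j$, but there may be non-trivial rational relations across distinct blocks. The Kac CLT, the LLN lower bound for $\sum \phi_j^2$, and the tail control on $\max_j |\phi_j|$ must therefore all be executed through the precise dissociation condition provided by the version of Kac's theorem in Section 5, possibly after a further subsequence extraction $(m_{j_k})$ that controls cross-block spectral interactions. A secondary issue is the passage from the partial product $\prod_{j\leq N}|P_{m_j}|^2$ to the full density $\prod_{k\leq N}|P_k|^2$ in the singularity conclusion, which relies on the precise shape of the Kakutani criterion from Section 4 and on the fact that the omitted factors are non-negative with Haar mean one.
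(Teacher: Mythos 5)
Your high-level skeleton (pass to the subsequence, invoke Kac, conclude via the criterion of Section~4) matches the paper, but the mechanism you propose is genuinely different from the one used there, and as written it has a real gap rather than a technical one. Every probabilistic estimate in your plan --- the orthogonality of the $\phi_j$ needed to control $\Sigma_N$, the law-of-large-numbers lower bound $\sum_{j\le N}\phi_j^2\ge cN$, and the Borel--Cantelli control of $\max_{j\le N}|\phi_j|$ --- requires joint, \emph{cross-block} stochastic independence (or at least quantitative decorrelation) of the functions $\phi_j=|P_{m_j}|^2-1$ under the Haar measure, whereas the hypothesis only gives rational independence within each block. You flag this yourself and defer it to ``a further subsequence extraction,'' but that deferral is exactly the missing idea: none of your steps (i)--(iii) is established without it. Two further points would fail even granting independence: the inequality $\log(1+x)\le x-\tfrac14 x^2$ is false (at $x=2$ one has $\log 3\approx 1.10>1$), and since $\phi_j$ takes values up to $p_{m_j}-1$ with $(p_m)$ unbounded, there is no fixed bounded interval on which to run a $\log(1+x)\le x-c\,x^2$ bound uniformly in $j$; the Chebyshev tail bound $h(|\phi_j|>c_j)\le c_j^{-2}$ forces $c_j\to\infty$ for summability, which degrades the quadratic constant $c$ as $N$ grows. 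Your final passage from convergence in probability of $\prod_{j\le N}|P_{m_j}|^2$ to singularity is fine (the family $\prod_{j\le N}|P_{m_j}|$ is bounded in $L^2(h)$, hence uniformly integrable, so condition (ii) of Theorem~\ref{Bourg-cri} follows), but the estimates feeding into it are not in place.

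The paper avoids all of this by staying multiplicative. With $Q=\prod_{i\le k}|P_{n_i}|$, the independence part of Kac's theorem (Theorem~\ref{Kac-CLT}) yields the exact factorization $\int_{\bR}Q\,|P_m|\,d\h=\big(\int_{\bR}Q\,d\h\big)\big(\int_{\bR}|P_m|\,d\h\big)$ for $m$ far along the subsequence (Lemma~\ref{Fejer}), while the Kac CLT combined with uniform integrability and Theorem~\ref{Chung} gives $\int_{\bR}|P_m|\,d\h\to\sqrt{\pi}/2<1$. One then chooses $n_1<n_2<\cdots$ inductively so that each new factor multiplies the $L^1$ norm by a constant strictly less than $1$; the quantities $\int_{\bR}\prod_{j\le k}|P_{n_j}|\,d\h$ decay geometrically and Theorem~\ref{Bourg-cri} applies directly. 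This requires only the distributional limit of a single block at a time plus one application of stochastic independence per inductive step, rather than the simultaneous control of all blocks that your additive $\log$-sum argument demands; the decorrelation you would need to complete your route is precisely what Lemma~\ref{Fejer} extracts from Kac's theorem, at which point the multiplicative argument is both shorter and already finished.
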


We recall that the real numbers $\lambda_1,\lambda_2,\cdots,\lambda_r$, $r \geq 2$,
are rationally independent if they are linearly independent over $\Z$ , i.e. for all $n_1,\cdots,n_r \in \Z,$
$$n_1\lambda_1+\cdots+n_r\lambda_r=0 \Longrightarrow n_1=\cdots=n_r=0.$$

We say that the set of real number
 $\big\{\lambda_n,~~n \geq 1~\big\}$ is rationally independent if for any finite subsequence $m_1,m_2,\cdots, m_r$, the
 real numbers $\lambda_{m_1},\lambda_{m_2},\cdots,\lambda_{m_r}$ are rationally independent.
\section{On Kakutani's criterion and Bourgain's singularity criterion
in the setting of generalized Riesz products on $\bR$.}
The famous dichotomy theorem of Kakutani has a rather long history. In his 1948 paper \cite{kakutani},
Kakutani established a purity law for infinite product measures. More precisely, he proved that if
$\ds \Pr=\bigotimes_{i=1}^{+\infty}\Pr_i$
and $\ds \Q=\bigotimes_{i=1}^{+\infty}\Q_i$ are infinite product measures, where $\Pr_i, \Q_i$ are probability
measures such that $\Pr_i$ is absolutely continuous with respect to $\Q_i$, for each positive integer $i$. Then
\begin{center}
 $\Pr \ll \Q$  or $\Pr \perp \Q$ according as $\ds \prod_{i} \int \bigg(\frac{d\Pr_i}{d\Q_i}\bigg) d\Q_i$
converges or diverges.
\end{center}

There are a several proofs of Kakutani's criterion in literature (see \cite{Brown-Moran} and the references therein). For a proof based on the Hellinger's integral we refer the reader to \cite[p.60]{ZHellinger}.


Here, applying Bourgain's methods combined with the central limit tools introduced in \cite{elabdaletds}, we obtain a new extension of Kakutani's theorem in the setting of generalized Riesz products on $\bR$. Indeed, we will see that
the independence along subsequences allows us to prove the singularity.\\



We start by stating and proving the Bourgain's singularity criterion in the setting of $\bR$. We remind that this criterion in the periodic setting follows from the initial remarks in the proof of Proposition 1 in \cite[equations (2.15) and (2.22)]{Bourgain}.
\begin{thm}[$\bR$ version of Bourgain's criterion]\label{Bourg-cri}The following are equivalent
\begin{enumerate}[(i)]
\item $\ds \int_{\bR} {{\prod_{k=0}^{N}\big|P_k\big|\;}}  d\h \tend{N}{+\infty}0.$
\item $\mu$ is singular with respect to the Haar measure.

\item {\it $\inf \left \{\displaystyle  \displaystyle \int_{\bR}
{{\prod_{\ell=1}^L\big| {P_{n_\ell}}\big|\;}} d\h\;:\; L\in
{\N},~n_1<n_2<\ldots <n_L\right\}=0.$ }
\end{enumerate}
\end{thm}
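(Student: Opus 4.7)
I follow the classical Bourgain strategy on the circle (\cite{Bourgain}, \cite{Klemes-Reinhold}), adapted to the compact group $\bR$ in the same spirit as \cite{elabdal-mlem}. Set $D_N = \prod_{k=0}^N |P_k|^2$, so that $\mu_N = D_N\,d\h$ are probability measures by \eqref{ries-prop}, converging weakly to $\mu$, with $\int_{\bR} \sqrt{D_N}\,d\h = \int_{\bR}\prod_{k=0}^N|P_k|\,d\h$. For (i) $\Rightarrow$ (ii), assume $\mu\perp\h$: by outer regularity of $\h$ and inner regularity of $\mu$, for each $n$ choose an open set $U_n\subset\bR$ with $\mu(U_n)>1-1/n$ and $\h(U_n)<1/n$; Portmanteau applied to $\mu_N\to\mu$ then yields $N_n$ with $\mu_{N_n}(U_n)>1-2/n$. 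Splitting $\int\sqrt{D_{N_n}}\,d\h$ over $U_n$ and $U_n^c$ and applying Cauchy--Schwarz (using $\int D_{N_n}\,d\h=1$) bounds the two pieces respectively by $\sqrt{\h(U_n)}<1/\sqrt{n}$ and $\sqrt{\mu_{N_n}(U_n^c)}<\sqrt{2/n}$, so (ii) holds along the consecutive initial segments $\{0,\ldots,N_n\}$.

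For (ii) $\Rightarrow$ (i), given finite $S_j=\{n_1^{(j)}<\cdots<n_{L_j}^{(j)}\}$ with $\epsilon_j:=\int\prod_\ell|P_{n_\ell^{(j)}}|\,d\h\downarrow 0$, set $F_j=\prod_\ell|P_{n_\ell^{(j)}}|^2$ and $\nu_j=F_j\,d\h$ (probability measures by \eqref{ries-prop}). By weak-$*$ compactness extract a cluster point $\nu_{j_m}\to\nu$; a Chebyshev estimate $\h(\{F_{j_m}>\delta\})\leq\epsilon_{j_m}/\sqrt{\delta}$ together with a subsequence choice $\sum_m\epsilon_{j_m}<\infty$ implies $F_{j_m}\to 0$ $\h$-almost everywhere. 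For each fixed $\delta>0$, form the open sets $V_m^\delta=\{F_{j_m}>\delta\}$ with $\nu_{j_m}(V_m^\delta)\geq 1-\delta$ and $\h(V_m^\delta)\to 0$, take $\widetilde U_\delta=\bigcup_{m\geq K}V_m^\delta$ (for $K$ large, so $\h(\widetilde U_\delta)\leq 2\delta$), and use Urysohn's lemma on the normal compact group $\bR$ on continuous test functions supported outside $\widetilde U_\delta$ to show that $\nu(\widetilde U_\delta)\geq 1-\delta$. The Lebesgue decomposition of $\nu$, combined with the absolute continuity of the integral applied to the $\h$-a.c.\ part of $\nu$ as $\delta\to 0$, yields $\nu\perp\h$.

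To transfer singularity from $\nu$ to $\mu$ I exploit the factorisation $D_M=F_j\cdot W_{j,M}$ (for $M\geq\max S_j$, with $W_{j,M}=\prod_{k\leq M,\,k\notin S_j}|P_k|^2$ itself a partial Riesz density), the weak $L^2$-convergence $\sqrt{F_{j_m}}\rightharpoonup 0$, and a diagonal extraction $(j_m,M_m)$ with $S_{j_m}\cap\{0,\ldots,M_m\}=\emptyset$, which produces initial-segment indices $N_k\to\infty$ with $\int\sqrt{D_{N_k}}\,d\h\to 0$. The Egorov/regularity argument of the (i) $\Rightarrow$ (ii) direction, applied to $D_{N_k}$ in place of $F_{j_m}$, then forces $\mu\perp\h$. \emph{The main obstacle} is precisely this last transfer: converting vanishing $L^1(\h)$-norms along arbitrary finite sub-products into vanishing $L^1(\h)$-norms along initial-segment products. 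On the circle this is the substance of Bourgain's key lemma; in the Bohr compactification it must be re-developed using the density of characters and the uniform-approximation results of Section~2 (Theorems \ref{Bochner}, \ref{Gelfand} and Lemma \ref{mean}), with additional care because $\bR$ is neither metrisable nor separable, so that the Egorov approximations and Urysohn separating functions require more delicate choices than in the classical periodic setting.
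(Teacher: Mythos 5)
Your (i) $\Rightarrow$ (ii) direction is sound and is essentially the paper's own argument: a regularity/Urysohn separation of $\mu$ from $\h$ (the paper uses a continuous $\varphi$ where you use the open set $U_n$ directly), Cauchy--Schwarz on the two pieces, and Portmanteau applied to $D_N\,d\h\to\mu$. The genuine gap is in the converse. Your route produces a weak-$*$ cluster point $\nu$ of the sub-product measures $\nu_j=F_j\,d\h$ and shows $\nu\perp\h$; but $\nu$ has no established relation to $\mu$, so that entire middle section proves nothing about $\mu$, and the step you yourself flag as ``the main obstacle'' --- converting smallness of $\int\prod_{k\in\mathcal N}|P_k|\,d\h$ for an arbitrary finite $\mathcal N$ into smallness along initial segments --- is exactly the content that is missing. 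The sketch you offer for it does not work as written: the factorisation $D_M=F_j\cdot W_{j,M}$ requires $S_j\subseteq\{0,\dots,M\}$, which is incompatible with your diagonal choice $S_{j_m}\cap\{0,\dots,M_m\}=\emptyset$, and the asserted weak $L^2$-convergence $\sqrt{F_{j_m}}\rightharpoonup 0$ is neither justified nor sufficient to control $\int\sqrt{D_{M_m}}\,d\h$. (A secondary point: $C(\bR)$ is not separable, so sequential weak-$*$ compactness of $(\nu_j)$ is not available; you would need nets.)

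The paper closes this gap with Lemma \ref{CS-B}, and contrary to your expectation it involves no topology and none of the approximation machinery of Section 2: writing $\prod_{k=0}^N|P_k|=\prod_{k\in\mathcal N}|P_k|^{1/2}\cdot\prod_{k\in\mathcal N^c}|P_k|^{1/2}\prod_{k=0}^N|P_k|^{1/2}$ and applying Cauchy--Schwarz twice, the identity \eqref{ries-prop} (namely $\int_{\bR}\prod_{k\in F}|P_k|^2\,d\h=1$ for every finite $F$) annihilates the two extraneous factors and gives
\[
\int_{\bR}\prod_{k=0}^{N}\big|P_k\big|\,d\h\;\le\;\Big(\int_{\bR}\prod_{k\in\mathcal N}\big|P_k\big|\,d\h\Big)^{1/2}
\]
for every $N\ge n_L$; the computation is carried out on finite intervals $[a,b]$ of $\R$ and one lets $b-a\to\infty$ to pass to the asymptotic mean. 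With this in hand, (ii) forces $\int_{\bR}\prod_{k=0}^N|P_k|\,d\h\to0$, and the Chebyshev/Portmanteau argument of your first direction (or the paper's: $E=\{\prod_{k=0}^N|P_k|\ge\epsilon\}$, $h(E)\le\epsilon$, $E^c$ open, $\mu(E^c)\le\epsilon^2$) finishes. Alternatively you could bypass the transfer entirely by running that same argument with the sub-product itself: take $E=\{\prod_{k\in\mathcal N}|P_k|\ge\epsilon\}$ and bound $\mu(E^c)$ via the factorisation $D_M=\big(\prod_{k\in\mathcal N}|P_k|^2\big)\big(\prod_{k\in\{0,\dots,M\}\setminus\mathcal N}|P_k|^2\big)$ for $M\ge n_L$ together with \eqref{ries-prop}. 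Either way, the detour through the cluster point $\nu$ should be deleted.
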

\begin{proof}{}$(i)\Longrightarrow (ii)$: it suffices to show that
for any $\epsilon>0$, there is a Borel set $E \subset \bR$ with
$h(E)<\epsilon$ and $\mu(E^c)<\epsilon$. Let $0<\epsilon<1$, by $(i)$ there exists $N_0$ such that 
$$\ds \int_{\bR} {\prod_{k=0}^{N_0}|P_k|} \;d\h <\epsilon^2.$$ The
set $E=\left\{\lambda\in\bR\;:\;{\prod_{k=0}^{N_0}|P_k(\lambda)|} \geq\epsilon\right\}$ satisfies:
$$h(E)\leq \frac1\epsilon\left\|{\prod_{k=0}^{N_0}P_k}\right\|_1\leq \epsilon^2/\epsilon=\epsilon,$$
and, since $E^c$ is an open set, it follows by the portmanteau theorem that
\begin{eqnarray*}
 \mu(E^c) &\leq &\liminf_{M\to+\infty}\int_{E^c}{\prod_{k=0}^{M}|P_k|^2}\; d\h \\
& \leq & \liminf_{M\to+\infty}\int_{E^c}{
\prod_{k=0}^{N_0}|P_k|^2  \prod_{k=N_0+1}^{M}|P_k|^2}\;d\h \\
& \leq& \epsilon^2 \lim_{M\to+\infty} \int_{\bR}{\prod_{k=N_0+1}^{M}|P_k|^2}\; d\h=
\epsilon^2 <\epsilon.
\end{eqnarray*}

\medskip

For $(ii) \Longrightarrow (i)$: given $0<\epsilon<1$,
there exists a continuous function  $\varphi$  on $\bR$ such that:
$$0\leq\varphi\leq 1,\qquad
\mu(\{\varphi\neq 0\})\leq\epsilon \qquad (*)\qquad\hbox{and}\quad
h(\{\varphi\neq 1\})\leq\epsilon\qquad (**).$$
Indeed, by hypothesis, there exists a Borel set $E \subset \bR$ such that $\mu(E)=h(E^c)=0.$ Let $K \subset E$ be a compact set such that
$h(K^c) < \epsilon$. Since $\mu(K)=0$, we can choose a compact set $L \subset K^c$, such that $\mu(L^c) <\epsilon$. Then, by Urysohn's lemma \cite[pp.47]{Dudely}, there exists a continuous function $\varphi~~:\bR \rightarrow [0,1]$, taking the value $0$ at all points of $L$, and the value $1$ at all points of $K$. It is easy to check that $\varphi$ satisfy $(*)$ and $(**)$.\\

Let $f_N=\ds {\prod_{k=1}^N |P_k|}$. By Cauchy-Schwarz inequality, we have
\begin{multline*}
\int_{\bR} f_N\  d\h  =  \ds \int_{\{\varphi\neq 1\}} f_N\ d\h + \int_{\{\varphi= 1\}} f_N\ d\h
\\
 \leq h(\{\varphi\neq 1\})^{1/2}\left(\ds\int_{\bR} f_N^2\ d\h\right)^{1/2} + \left(\ds \int_{\{\varphi= 1\}} f_N^2\ d\h\right)^{1/2}
h(\{\varphi= 1\})^{1/2}
 \\\leq \sqrt{\epsilon} + \left(\ds \int_{\bR} f_N^2\, \varphi\ d\h\right)^{1/2}.
\end{multline*}
Since $\mu$ is the weak limit of $f_N^2 d\h$, we obtain
$$\lim_{N\rightarrow\infty} \int_{\bR} f_{N}^2\, \varphi\ d\h=\int_{\bR} \varphi\ d\mu
\leq \mu(\{\varphi\neq 0\})\leq\epsilon.$$
Thus,
$\limsup\ds \int_{\bR} f_N\ d\h \leq 2 \sqrt{\epsilon}$.
Since $\epsilon$ is arbitrary,  we get
$\ds \lim_{N\rightarrow\infty} \ds\int_{\bR} f_{N}\ d\h=0.$\\

Now $(i)$ obviously implies $(iii)$ and by  a simple application of Cauchy-Schwarz inequality, $(iii)$ implies $(i)$. Indeed, consider $n_1<n_2<\ldots <n_L$, $N\geq n_L$ and denote $\mathcal{N}=\left\{n_1<n_2<\ldots <n_L\right\}$ with $\mathcal{N}^c$
its  complement in $\left\{1,\cdots,N\right\}$.
\noindent{}Then
\begin{multline*}
\int_{\bR}\prod_{k=0}^{N}|P_k|\  d\h =
\int_{\bR} \prod_{k\in\mathcal N}|P_k|^{\frac12} \times \prod_{k\in\mathcal N^c}|P_k|^{\frac12}\prod_{k=0}^N|P_k|^{\frac12}\ d\h
\\\leq
\left(\int_{\bR}
\prod_{k\in\mathcal N}|P_k|\  d\h\right)^{\frac12}\left(
\int_{\bR} \prod_{k\in\mathcal N^c}|P_k|\times\prod_{k=0}^N|P_k|\  d\h\right)^{\frac12}
\\\leq
\left(\int_{\bR} \prod_{k\in\mathcal N}|P_k|\  d\h\right)^{\frac12}\left(
\int_{\bR} \prod_{k\in\mathcal N^c}|P_k|^2\  d\h\right)^{\frac14}\left(\int_{\bR} \prod_{k=0}^N|P_k|^2\
d\h\right)^{\frac14}.
\end{multline*}
Therefore
\begin{multline*}
 \int_{\bR}\prod_{k=0}^{N}|P_k|\  d\h \leq
\left(\int_{\bR} \prod_{k\in\mathcal N}|P_k|\  d\h\right)^{\frac12}\left(
  \int_{\bR} \prod_{k\in\mathcal N^c}|P_k|^2\  d\h\right)^{\frac14}\left(\int_{\bR} \prod_{k=0}^N|P_k|^2\
d\h\right)^{\frac14}\\=
\left( \int_{\bR} \prod_{k\in\mathcal N}|P_k|\  d\h(t)\right)^{\frac12}.
\end{multline*}

The last equality follows from \eqref{ries-prop}.
\end{proof}

From now, let us fix a sequence $\M$ of positive integers for which the set
\linebreak $\big\{h_m,s_{m,0},\cdots,s_{m,p_m-1},~~m\in \M \}$ is linearly independent over the rational numbers. Let $\ k \in {\N}$ and let ${\mathcal {N}}=\left\{
n_1<n_2<\ldots <n_k\right\}$ be a subsequence of $\M$. Put
\[
Q_k\left(t\right) =\Prod_{i=1}^k {P_{n_i}(t)}.
\]
Our strategy in the proof of our main theorem is to construct a subsequence \linebreak $\left\{
n_1<n_2<\ldots\right\}$ of $\M$ for which the Bourgain's criterion (Theorem \ref{Bourg-cri}) holds.\\

Having in mind applications beyond the context of this paper, we state and prove a sufficient condition for the existence of an absolutely continuous component with respect to the Haar measure for a given Riesz product on $\bR$. In the case of the torus, the result is due to M\'elanie Guenais \cite{Guenais}.
\begin{Prop}
If $\displaystyle
\sum_{k=1}^{+\infty}\sqrt{1-\bigg(\int_{\bR}{|P_k|\;}d\h\bigg)^2}<\infty$, then
$\mu$ admits an absolutely continuous
component.
\end{Prop}
\begin{proof}{}
Write $v_k^2=1-\|P_k\|^2$. Then
$\sum_{k=1}^\infty v_k < \infty$, equivalently  $\prod_{k=1}^\infty\|P_k\|_1 >0$. For all functions $f,g \in L^2(\bR, d\h)$, Cauchy-Schwarz inequality gives

$$\mid\| f\cdot g\|_1 - \|f\|_1 \|g\|_1\mid \leq (\| f\|_2^2 -\|f\|_1^2)^{1/2} (\|g\|_2^2 -\|g\|_1^2)^{1/2}.$$

Fix an integer $n_0 > 1$ and let $k > n_0$. Then

$$\bigg| \big\|\prod_{j=n_0}^k P_j\big\|_1 - \big\|\prod_{j=n_0}^{k-1}P_j \big\|_1\| P_k\|_1\bigg|$$
$$\leq \bigg(\big\| \prod_{j=n_0}^{k-1}P_j\big\|_2^2 - \big\|\prod_{j=n_0}^{k-1} P_j\big\|_1^2\bigg)^{1/2} \bigg(
\| P_k\|_2^2 - \| P_k\|_1^2\bigg)^{1/2}$$
$$\leq v_k.$$
Whence
$$\bigg| \big\|\prod_{j=n_0}^k P_j\big\|_1 - \big\|\prod_{j=n_0}^{k-1}P_j \big\|_1\| P_k\|_1\bigg| \leq v_k,$$
and this gives
\begin{gather*}
\bigg| \big\|\prod_{j=n_0}^{k-1} P_j\big\|_1\|P_k\|_1 -
\big\|\prod_{j=n_0}^{k-2}P_j \big\|_1\| P_{k-1}\|_1\| P_k\|_1\bigg| \leq v_{k-1}\|P_k\|_1 \leq
v_{k-1},\\
\begin{array}{ccc}
  \vdots & \vdots & \vdots \\
  \vdots & \vdots & \vdots\\
  \vdots & \vdots & \vdots\\
\end{array}\\
\bigg| \big\|\prod_{j=n_0}^{n_0+1} P_j\big\|_1\prod_{j=n_0+2}^{k}\|P_j\|_1 -
\prod_{j=n_0}^{k}\| P_j\|_1\bigg| \leq v_{n_0+1},
\end{gather*}

since, for any $j \in \N$,  $\|P_j\|_1\leq 1$  by \eqref{ries-prop}.
On adding the above inequalities:
$$\bigg| \|\prod_{j=n_0}^kP_j\|_1 - \prod_{j=n_0}^k\| P_j\|_1)\bigg| \leq \sum_{j=n_0}^kv_j.$$

Since $\prod_{j=1}^\infty \| P_j\|_1 > 0$ and $\sum_{j=1}^\infty v_k < \infty$,
we see that  $\ds \limsup_{k\rightarrow \infty}\| \prod_{j=1}^k P_j\|_1  > 0$. Whence,
by Bourgain's criterion for singularity (Theorem \ref{Bourg-cri}),  we see that $\mu$ is not singular to Haar measure
on $\bR$.
\end{proof}

\section{On Kac's Central Limit Theorem.}
Kac's central limit theorem in the setting of $\bR$ is stated and proved in \cite{Kac}.
For the sake of completeness we prove it here using standard probability arguments.



We will need the following classical central limit theorem \cite[p.81]{Dacunha-Castelle}. For our purpose, we state it in the following form
\begin{thm}[Complex CLT ]\label{MCLT} Let
$(Z_k)_{k \in \N}$ be a sequence of independent and identically distributed complex random variables. Suppose that  the real and imaginary parts of $Z_1$ have variance $\frac12$ and covariance $0$. Then, the sequence of random variables
$$
\left(\frac1{\sqrt {n}}\sum_{k=1}^{n} Z_{k}\right)_{n\geq1}
$$
converges in distribution to the complex gaussian measure $\mathcal N_{\C}(0,1)$ on $\C$.
\end{thm}

Now let us state and prove Kac's central limit theorem.
\begin{thm}[Kac's CLT \cite{Kac}]\label{Kac} Let $(\lambda_n)_{n \in \N}$ be a sequence of rationally independent real numbers.
Then the functions $\cos(\lambda_n t)+i\sin(\lambda_n t), n=1,\cdots$,
are stochastically independent and identically distributed under the Haar measure of $\bR$ and
converges in distribution to the complex gaussian measure $\mathcal N_{\C}(0,1)$ on $\C$.
\end{thm}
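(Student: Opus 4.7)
The plan is to read the conclusion in its standard sense: with $X_k(t) := \cos(\lambda_k t) + i\sin(\lambda_k t) = e^{i\lambda_k t}$ viewed as complex random variables on the probability space $(\bR, \h)$, the family $(X_k)_k$ is mutually stochastically independent, and the normalized partial sums $\frac{1}{\sqrt{n}}\sum_{k=1}^n X_k$ converge in distribution to $\mathcal N_{\C}(0,1)$. Accordingly, the proof splits into two parts: (a) establishing independence, and (b) applying the Multidimensional CLT (Theorem \ref{MCLT}).

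For (a) I would extend the combinatorial argument in the proof of Theorem \ref{Kac-CLT}. For any finite set of indices $1,\dots,k$ and nonnegative integer exponents $m_j,n_j$, the product $\prod_{j=1}^k X_j^{m_j}\overline{X}_j^{n_j}$ is itself a single character of $\bR$, namely $e^{i(\sum_j (m_j - n_j)\lambda_j)t}$, and its integral against $\h$ vanishes by orthogonality of characters unless $\sum_j (m_j - n_j)\lambda_j = 0$; by rational independence of $(\lambda_j)$ this forces $m_j = n_j$ for every $j$. This immediately yields
\[
\ce\Bigl[\prod_{j=1}^k X_j^{m_j}\overline{X}_j^{n_j}\Bigr] = \prod_{j=1}^k \ce\bigl[X_j^{m_j}\overline{X}_j^{n_j}\bigr].
\]
Since the algebra generated by $\{e^{\pm i\lambda_j t}\}_{j}$ is uniformly dense in $C(\bR)$ (by Bohr's theorem together with Stone--Weierstrass applied on the compact group $\bR$), factorization of all such joint moments is equivalent to joint stochastic independence.

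For (b), I apply Theorem \ref{MCLT} with $d=1$, $k_n = n$, and $Z_{n,k} = X_k$. Square integrability is trivial since $|X_k|=1$; independence of $\{X_k\}_{k=1}^n$ for each fixed $n$ is part (a); the Lindeberg condition holds vacuously because $\|X_k\| = 1 < \varepsilon\sqrt{n}$ for all large $n$; the centering $\ce[X_k] = \intBohr_{\R} e^{i\lambda_k t}\,dt = 0$ follows from orthogonality of the nontrivial character $e^{i\lambda_k t}$; and for the covariance structure one has $\Cov(X_k, X_k) = \ce|X_k|^2 = 1$, so $\gamma_{1,1}=1$, while the pseudo-covariance $\ce[X_k \cdot X_k] = \ce[e^{2i\lambda_k t}] = 0$ vanishes because $2\lambda_k \neq 0$ (again by rational independence). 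These inputs identify the limit distribution as the standard circular complex Gaussian $\mathcal N_{\C}(0,1)$.

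The main conceptual step I anticipate is the upgrade in (a) from the factorization of polynomial moments to genuine stochastic independence; this is where the compactness of $\bR$ and the density of trigonometric polynomials really matter, and one must be careful to include both $X_j$ and $\overline{X}_j$ as independent ``variables'' so that all mixed moments (not merely those in nonnegative powers of $X_j$) are captured. Once this is handled, the CLT verification in (b) is entirely routine, and one may optionally strengthen the conclusion using Theorem \ref{Chung} to deduce convergence of moments, which is useful for the subsequent singularity arguments.
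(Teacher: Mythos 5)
Your proposal is correct and takes essentially the same route as the paper: the paper's proof of this theorem consists of one sentence invoking Theorem \ref{Kac-CLT} for the independence and declaring the hypotheses of Theorem \ref{MCLT} ``straightforward to verify,'' and the verification you spell out (zero mean, unit covariance, vanishing pseudo-covariance, vacuous Lindeberg condition) is exactly what is meant. Two remarks. First, your extension of the moment-factorization argument from the cosines of Theorem \ref{Kac-CLT} to the complex exponentials together with their conjugates is a genuine detail the paper glosses over, and you are right that it is needed. Second, one small imprecision in your step (a): the algebra generated by the finitely many characters $e^{\pm i\lambda_j t}$, $1\le j\le k$, is \emph{not} uniformly dense in $C(\bR)$ (it need not separate points of the Bohr compactification). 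The correct way to upgrade factorization of mixed moments to stochastic independence is to push the joint law of $(X_1,\dots,X_k)$ forward to $\T^k$; the monomials $\prod_j z_j^{m_j}\bar z_j^{\,n_j}$ restricted to $\T^k$ are precisely the characters of $\T^k$, which are dense in $C(\T^k)$ by Stone--Weierstrass, so the factorized moments identify the joint law as the product of the marginals. With that repair the argument is complete, and your closing observation that Theorem \ref{Chung} then yields convergence of the first absolute moment is exactly how the theorem is used in Section 6.
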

\begin{proof}By Theorem \ref{MCLT}, it suffices to prove that the random variables  $e(\lambda_n t)$, $n=1,\cdots$ are stochastically independent and identically distributed under the Haar measure of $\bR$. For that, notice that for any positive integer $k$ and for a given positive integers $l_1,l_2,\cdots, l_k$, we have
\begin{eqnarray*}
&&\int_{\bR}\bigg(e(\lambda_1 t)\bigg)^{l_1}
\bigg(e(\lambda_2 t)\bigg)^{l_2} \cdots \bigg(e(\lambda_k t)\bigg)^{l_k} d\h(t)
\\&&=
\int_{\bR}e\bigg(\big(\lambda_1l_1+\lambda_2l_2+\cdots+\lambda_kl_k\big) t\bigg) d\h(t)
\\&&=
\int_{\bR}\bigg(e(\lambda_1 t)\bigg)^{l_1}dt \int_{\bR}
\bigg(e(\lambda_2 t)\bigg)^{l_2}dt \cdots
\int_{\bR}\bigg(e(\lambda_k t)\bigg)^{l_k} d\h(t).
 \end{eqnarray*}
Indeed, $\ds \int_{\bR}e\big((\lambda_1l_1+\lambda_2l_2+\cdots+\lambda_kl_k) t\big) d\h(t)=1$ is equivalent to
$\lambda_1l_1+\lambda_2l_2+\cdots+\lambda_kl_k=0$, and by assumption, this is equivalent to $l_1=l_2=\cdots=l_k=0$. Therefore, if $\lambda_1l_1+\lambda_2l_2+\cdots+\lambda_kl_k=0$, then
$$\int_{\bR}e\big( (\lambda_1l_1+\lambda_2l_2+\cdots+\lambda_kl_k) t\big) d\h(t)=1
=\prod_{j=1}^{k}\int_{\bR}\bigg(e(\lambda_j t)\bigg)^{0}d\h(t),$$ and if
$\lambda_1l_1+\lambda_2l_2+\cdots+\lambda_kl_k \neq 0$, then
$$\int_{\bR}e\big((\lambda_1l_1+\lambda_2l_2+\cdots+\lambda_kl_k) t \big) d\h(t)=0=
\prod_{j=1}^{k}\int_{\bR}e(\lambda_j l_j t)d\h(t).$$
Hence, the random variables  $e(\lambda_n t),$ $n=1,\cdots$, are stochastically independent
under the Haar measure on $\bR$. It remains to prove that $e(\lambda_n t),$ $n=1,\cdots$, are identically distributed.\\

Let $s_1,s_2 \in \R$ and let us compute the characteristic function of the random variable $e( \lambda t)$, with
$\lambda \in \{\lambda_n,~~n=1,2,\cdots\}$. We have
\begin{eqnarray*}
\int_{\bR}e\bigg(\re\big((s_1+is_2).e(-\lambda t)\big)\bigg) d\h(t)
&=& \sum_{n=0}^{+\infty} \frac{i^n}{n!}\int_{\bR}\bigg(\re\big((s_1+is_2).e(-\lambda t)\big)\bigg)^n d\h(t)\\
&=& \sum_{n=0}^{+\infty} \frac{i^n}{n!}\int_{\bR}\bigg(s_1\cos(\lambda t)+s_2 \sin(\lambda t)\bigg)^n d\h(t).
\end{eqnarray*}
Write
$$\cos(\lambda t)=\frac{1}{2}\bigg(e(\lambda t)+e(-\lambda t)\bigg),~~~~
\sin(\lambda t)=\frac{1}{2i}\bigg(e(\lambda t)-e(-\lambda t)\bigg),
$$
and notice that we have
$$\int_{\bR}e(\alpha t)d\h(t)=
\lim_{T \longrightarrow + \infty}\frac{1}{2T}\int_{-T}^{T} e(\alpha t)dt=\begin{cases}
                                           1 {\rm {~~if~~ }} \alpha=0,\\
                                           0 {\rm {~~if~not. }}
                                          \end{cases}
$$
Therefore
$$\int_{\bR}\bigg(s_1\cos(\lambda t)+s_2 \sin(\lambda t)\bigg)^n d\h(t)$$
$$=\sum_{k=0}^{n}\binom{n}{k} \frac{(s_2+is_1)^k (-s_2+is_1)^{n-k}}{(2i)^n}\Big(\int_{\bR} e\big((2k-n) \lambda t\big) d\h(t)\Big).$$
Whence
$$\int_{\bR}\bigg(s_1\cos(\lambda t)+s_2 \sin(\lambda t)\bigg)^n d\h(t)
=\begin{cases}
\ds \binom{n}{\frac{n}{2}}(-1)^{\frac{n}{2}}\frac{(s_2^2+s_1^2)^{{\frac{n}{2}}}}{(2i)^n} &\text{if $n$~is~even,}\\
0&\text{if $n$~is~odd}.
\end{cases}
$$
We thus get
\begin{eqnarray*}
\int_{\bR}e\bigg(\re\big((s_1+is_2).e(-\lambda t)\big)\bigg) d\h(t)
&=&\sum_{l=0}^{+\infty} \frac{(-1)^l}{4^l(l!)^2}(s_1^2+s_2^2)^l=J_0(|s_1+is_2|)\\&=&
\frac1{\pi}\int_{0}^{\pi}\cos\bigg(|s_1+is_2|.\sin(t)\bigg) dt,
\end{eqnarray*}
where $J_0$ is the familiar Bessel function. We conclude that the hypotheses of central limit theorem \ref{MCLT} are satisfied, and  the sequence
$$
\left(\frac1{\sqrt {n}}\sum_{k=1}^{n} e(\lambda_k t)\right)_{n\geq1}
$$
converges in distribution to the complex gaussian measure $\mathcal N_{\C}(0,1)$ on $\C$. In particular,
by Theorem \ref{Chung} below,
\begin{eqnarray*}\label{Gauss2}
\int_{\bR}\Big|\frac1{\sqrt {n}}\sum_{k=1}^{n} e(\lambda_k t) \Big|d\h(t) \tend{n}{+\infty}\frac12{\sqrt{\pi}}.
\end{eqnarray*}
\end{proof}
\noindent{}It is well-known that the convergence in distribution or probability does not in general imply that
the moments converge (even if they exist). The useful condition to ensure the convergence of the moments
is the uniform integrability. Indeed, we have
\begin{thm}\label{Chung}If the sequence of random variables $\{X_n\}$ converges in distribution to some
random variable $X$ and if for some $p>0$, $\ds \sup_{n \in \N} \big(\ce(|X_n|^{p})\big) =M <+\infty$, then for each $r<p$,
\[
\lim_{n \longrightarrow +\infty}\ce\bigg(\big|X_n\big|^r\bigg)=\ce\bigg(\big|X\big|^r\bigg).
\]
\end{thm}
\noindent{}For the proof of Theorem \ref{Chung} we refer to \cite[p.32-33]{Billingsley} or
\cite[p.100]{Chung}. We remind that the condition
\begin{eqnarray*}\label{uniform}
\sup_{ n \in \N}\bigg(\ce\big(\big|X_n\big|^{1+\varepsilon}\big)\bigg) < +\infty,
\end{eqnarray*}
for some $\varepsilon$ positive, implies that $\{X_n\}$ are uniformly integrable.\\

Notice that our proof yields that for any sequence $(\lambda_j)_{j \in \N}$ of rationally independent
numbers, we have
\begin{eqnarray}\label{question}
 \int_{ \bR}\bigg|\frac{1}{\sqrt{q_n}}\sum_{j=0}^{q_n-1}e(\lambda_jt)\bigg|dt \tend{n}{\infty}
\frac12\sqrt{\pi}.
\end{eqnarray}

\section{Proof of the main result (Theorem \ref{main-here}).}
Using the analogue of F\'ejer's lemma combined with the CLT methods introduced in \cite{elabdaletds}, we will give a
direct proof of the singularity of a large class of generalized Riesz products on $\bR$. Therefore,
our strategy is slightly different from the strategy used by many authors in the case of the torus \cite{Bourgain}, \cite{Klemes},
\cite{Klemes-Reinhold}, \cite{elabdaletds}.
More precisely, they showed
that the weak limit point of
the sequence $\left(\left||P_m|^2-1\right|\right)$ is bounded below by a positive constant and it is well-known
that this implies the singularity of the generalized Riesz products (see for instance
\cite{elabdaletds} or \cite{Klemes}).\\

\noindent{}Let us start our proof with the following analogue of F\'ejer's lemma \cite[th 4.15, p.52]{Zygmund}

\begin{lemm}\label{Fejer} Under the notations of Theorem \ref{main-here}, let $n_1<n_2<\cdots<n_k \in \M$ and
$Q=\prod_{j=1}^{k}P_{n_j}$. Then, for any $m > n_k$, $m\in \M$, we have
\[
\int_{\bR} {|Q| \big|
{P_m} \big|} d\h = \bigg(\int_{{\bR}} {|Q|} d\h\bigg)
\bigg(\int_{\bR}{|P_m|} d\h\bigg).\]
\end{lemm}

\begin{proof} By our assumption $\big(h_k, (s_{k,p_k-1})\big)_{k \in \M}$ are rationally independent. Hence,
by Kac's theorem (Theorem \ref{Kac}), for $m>{n_k}$, $m\in \M$, $Q$ and $P_m$ are stochastically independent since
$Q$ is the sum of the random variables $e(\lambda t)$ with $\lambda \in \{jh_{n_i}+\overline{s_{n_i,j}}, j=0,\cdots,p_{n_i}-1,i=1,\cdots,k\}$ and $P_m$ is the sum of the random variables
$e(\lambda t)$ with $\lambda \in \{jh_{m}+\overline{s_{m,j}}, j=0,\cdots,p_{m}-1\}$.
\end{proof}


\begin{proof}[Proof of Theorem \ref{main-here}] By application of Lemma \ref{Fejer} combined with \eqref{question}, we proceed inductively to construct a subsequence $\{n_k\}$ of $\M$ such that, for any $k \geq 1$, we have
\begin{eqnarray}\label{geometric}
 \int_{\bR} { \Prod_{j=1}^{k+1} \big|{P_{n_j}}\big|} d\h
\leq \frac{51}{100}\sqrt{\pi}\int_{\bR} { \Prod_{j=1}^{k} \big|{P_{n_j}}\big|} d\h.
\end{eqnarray}
Denote $u_k=\ds \int_{\bR} { \Prod_{j=1}^{k} \big|{P_{n_j}}\big|} d\h$. Since $\ds \frac{51}{100}\sqrt{\pi}<1$, the sequence
$(u_k)$ will be decreasing with limit $0$. Hence, $\mu$ is singular with respect to the Haar measure by Bourgain's criterion (Theorem \ref{Bourg-cri}).\\

By our assumption combined with Theorem \ref{Kac}, $(P_m)_{m \in \M}$
converges in distribution to the complex gaussian measure $\mathcal N_{\C}(0,1)$. Hence, from Theorem \ref{Chung} applied with
$p=2$ and $r=1$ we get
\begin{eqnarray}\label{Gauss1}
\lim_{\overset{m\longrightarrow +\infty}{m\in \M}}\int_{\bR }{|P_m|}d\h=
\int_{\C} \big|z\big| d{\mathcal N_{\C}(0,1)}(z)=\frac{\sqrt{\pi}}{2}.
\end{eqnarray}
Remind that the density of the standard complex normal distribution $\mathcal N_{\C}(0,1)$ is given by
\[
    f(z) = \frac{1}{\pi} e^{-|z|^2}.
\]
Let us assume that we have already construct $n_1<n_2<n_3<\cdots<n_k$ satisfying $u_k
\leq \frac{51}{100}\sqrt{\pi} u_{k-1}$. By \eqref{Gauss1} there exists $m>n_k$, $m\in \M$,
such that
\[
\Big|\int_{\bR }{|P_m|}d\h-\frac{\sqrt{\pi}}{2}\Big| < \frac{\sqrt{\pi}}{100}.
\]
Therefore, by Lemma \ref{Fejer}, we can write
\[
\int_{\bR}{{|Q| |P_m|}} d\h \leq \frac{51\sqrt{\pi}}{100}
\int_{\bR}{|Q|} d\h,
\]
and set $n_{k+1}=m$. This implies that the inequality \eqref{geometric} holds
which finish the proof of the theorem.
\end{proof}
\begin{rem}The argument in the above
proof strongly depends on the assumption that
along subsequence the set of positive real numbers $\big\{h_m,s_{m,0},\cdots,s_{m,p_m-1},~~~~m\geq 1\big\}$ is
linearly independent over the rationales.
In the general case, one may use the methods of \cite{Bourgain}, \cite{Klemes}, \cite{Klemes-Reinhold}
and \cite{elabdaletds} to establish the singularity of a large
class of generalized Riesz products on
$\bR$, in particular when $(p_m)$ is bounded. In a forthcoming paper, we
 will see how to extend classical results from the torus and real line settings
to the Riesz products on $\bR$.
\end{rem}

In view of \eqref{question} one may ask
\begin{question} Let $n \in \N^*$ and put
\[
J_{\lambda,n}=\{P \in \Tr ~:~P(t)=\sum_{k=0}^{n}a_k e^{i \lambda_k t}, a_0=1,~a_k \in \{0,1\}\},
\]
where $ \Tr$ is the subspace of polynomials on $\bR$. The polynomials in $J_{\lambda,n}$ are called Newman polynomials. Is it true that have for any increasing sequence $(\omega_j)_{j \in \N}$ of real numbers, we have
 \[
\Sup{n \geq 1}\bigg\{\Sup{P \in J_{\omega, n}}\bigg(\frac{\big\|P(t)\big\|_1}
{\big\|P(t)\big\|_2}\bigg)\bigg\}<1? \]
\end{question}
In the contrast of the previous question, one may ask
\begin{question}
Does there exist a sequence of $L^1$-flat Newman polynomials on $\bR$? That is, does there exist a sequence
of polynomials $P_n \in J_{\lambda,n}$ such that
$$\lim_{n \longrightarrow +\infty} \frac{\big\|P_n(t)\big\|_1}
{\big\|P_n(t)\big\|_2}=1?$$
\end{question}

\begin{thank} The author would like to express thanks to anonymous referee for his/her helpful comments and suggestions, which significantly contributed to improve the quality of the paper.
He also express thanks to M. Lema\'nczyk, J-P. Thouvenot,  E. Lesigne,  A. Bouziad, J-M.
Strelcyn and  A. A. Prikhod'ko for fruitful discussions on the subject.
It is a pleasure for him to acknowledge the warm hospitality of the
Poncelet French-Russian Mathematical Laboratory in Moscow where a part of this work has been done.
\end{thank}

\end{document}